\newcommand{\fol}{\mbox{${\mathscr F}$}}
\newtheorem{nada}{Nada}[section]
\newtheorem{definition}[nada]{Definition}
\newtheorem{corollary}[nada]{Corollary}
\newtheorem{thm}[nada]{Theorem}
\newtheorem*{thm*}{Theorem}
\newtheorem{lemma}[nada]{Lemma}
\newtheorem{rmk}[nada]{Remark}
\newtheorem{example}[nada]{Example}
\newcommand{\bc}{\begin{center}}
\newcommand{\ec}{\end{center}}
\newcommand{\noi}{\noindent}
\theoremstyle{plain}
\begin{document}

\title{Baum-Bott Residue of Flags of Holomorphic Distributions}
\hyphenation{ho-mo-lo-gi-cal}
\hyphenation{fo-lia-tion}

\begin{abstract}

In this work we extend the residue theory from flag of holomorphic foliations to flag of holomorphic distributions and we provide an effective way to calculate this class in certain cases. As a consequence, we show that if we consider a flag $\mathcal{F} = (\mathcal{F}_{1}, \mathcal{F}_{2})$ of holomorphic distributions on $\mathbb{P}^{3}$, we get a relation between the degrees of the distributions in the flag, the tangency order of distributions, the Euler characteristic and the degree of the curve $C.$

 
\end{abstract}

\author{Antonio M. Ferreira}
\address{ Antonio Marcos Ferreira da Silva \\ DMA - UFES,  Rodovia BR-101, Km 60, Bairro Litor\^aneo, S\~ao Mateus-ES, Brazil, CEP 29932-540}
\email{antonio.m.silva@ufes.br}

\author{Fernando Louren\c co}
\address{ Fernando Louren\c co \\ DMM - UFLA,  Campus Universit\'ario, Lavras MG, Brazil, CEP 37200-000}
\email{fernando.lourenco@ufla.br}
\thanks{ }

\maketitle

\section{Introduction}

The residue theory of holomorphic foliations was developed  by several authors, see for example  Baum and Bott [\ref{BB1}, \ref{Baum}] and   Suwa [\ref{Su1}, \ref{Suwa2}]. Let $\mathcal{F}$ be a holomorphic foliation  of dimension $k$ on a complex manifold $M$ and let $\varphi$ be a holomorphic symmetric polynomial of degree $d$ satisfying the vanishing condition $ n-k < d \leq n $ and let $ Z \subset S(\mathcal{F})$ be a compact connect component of singular set of $\mathcal{F}$, there exists a homology class $Res_{\varphi}(\mathcal{F}, Z) \in H_{2(n-d)}(Z; \mathbb{C})$ such that if $M$ is compact

$$ \varphi(\mathcal{N}_{\mathcal{F}})\frown [M] = \sum_{Z} Res_{\varphi}(\mathcal{F}; Z), $$

\noindent where $\mathcal{N}_{\mathcal{F}}$ denotes the normal sheaf of the foliation $\mathcal{F}$.

In general, there is no way to compute the residue class $Res_{\varphi}(\mathcal{F}; Z)$ and this is an open problem in foliation theory. Several authors have worked on this topic and have obtained interesting results, see [\ref{BCL},\ref{BrSu}, \ref{CoLo}, \ref{Dia}, \ref{Vi}].

For instance in the case $k=1$ and $S(\mathcal{F})$ consists only of isolated singularities we have an expression of the above residue, see [\ref{BB1}, Theorem 1]:

$$ Res_{\varphi}(\mathcal{F}; p) = Res_p\left[\begin{array}{cccc} \varphi(Jv)\\ v_1,\ldots,v_n\end{array}\right ],
$$

\noindent where $v=(v_1,\ldots,v_n)$ is a germ of holomorphic vector field at $p$, local representative of $\fol$, $Jv$ is its Jacobian matrix and $Res_p\left[\begin{array}{cccc} \varphi(Jv)\\ v_1,\ldots,v_n\end{array}\right ]$ is the Grothendieck residue.

Let $\mathcal{F}$ be a dimension $k$ holomorphic foliation on  a compact complex manifold $M$ such that $dim S(\mathcal{F})\leq k-1$, and let $Z$ be an irreducible compact component of $S(\mathcal{F})$ of dimension $k-1$. In this case, Baum and Bott in [\ref{BB1}, Theorem 3, p. 285], and  Corr\^ea and Louren\c co in [\ref{CoLo}] give an effective way to compute the residue class $Res_{\varphi}(\mathcal{F}; Z)$, where $\varphi$ is a symmetric homogeneous polynomial  of degree $n-k+1$. To do this,   take a generic point $p \in Z$ such that $p$ is a point where $Z$ is smooth and disjoint from the other singular components. Now, consider $D_{p}$ a ball centered at $p$, of dimension $n-k+1$ sufficiently small and transversal to $Z$ in $p$. Thus

$$Res_{\varphi}(\mathcal{F}; Z) = Res_p\left[\begin{array}{cccc} \varphi(Jv|_{D_{p}})\\ v_1,\ldots,v_{n-k+1}\end{array}\right ][Z]$$
 
\noindent where $Res_p\left[\begin{array}{cccc} \varphi(Jv|_{D_{p}})\\ v_1,\ldots,v_{n-k+1}\end{array}\right ]$ represents the Grothendieck residue of the foliation $\mathcal{F}$ restricted to $D_{p}$. For other progress in residue theory we refer to [\ref{BrSu}, \ref{Dia}].

We define a $2$-flag of holomorphic foliations by a sequence of $2$ foliations $\mathcal{F} = (\mathcal{F}_{1}, \mathcal{F}_{2})$ such that the leaves of the foliation $\mathcal{F}_{1}$ is contained in $\mathcal{F}_{2}$ ones. We also define the singular set of $\mathcal{F}$, by union of the singular sets of the foliations, i.e., $S(\mathcal{F}) := S(\mathcal{F}_{1}) \cup S(\mathcal{F}_{2})$. For an overview about flag theory we refer to [\ref{BCL}].

There exist many works in flags theory. Feigin studied characteristic classes of flags in 1975, see [\ref{Fei}], where the author investigates an obstruction for existence of flags integrably homotopic. Mol in [\ref{Mol}] studied the behavior of singularities of flags and its polar varieties. In the same sense, Corr\^ea and Soares studied the Poincar\' e problem for flags in [\ref{CoSo}]. 

More recently in [\ref{BCL}, Theorem 2] Brasselet, Corr\^ea and Louren\c co studied residues of flag of holomorphic foliations and they proved a residue theorem of Baum-Bott type for flags.








Although there is a residue theory for flag, it is not simple to calculate the residue of flag in general. The goal of this paper is to show a partial answer of this problem. We start the paper with an extension of the residue theorem of flag to distribution.

\begin{thm}\label{1.0.14} Let $\mathcal{F} = (\mathcal{F}_{1}, \mathcal{F}_{2} )$ be a $2$-flag of singular holomorphic distributions of codimension $s_1$, $s_2$ respectively on a compact complex manifold
$M$ of dimension $n$. Let $\varphi_{1}=c_{p_1}\cdots c_{p_k}$ and $\varphi_{2}=c_{t_1}\cdots c_{t_q}$ be Chern monomials, of degrees $d_1$, $d_2$ respectively, such that $p_i>s_1-s_2$ for some $i$ or $t_j>s_2$ for some $j$. Then for each compact connected component $Z$ of
$S(\mathcal{F})$ there exists a class Res$_{\varphi_{1}, \varphi_{2}} (\mathcal{F} , \mathcal{N}_{\mathcal{F}}; Z )  \in  H_{2n - 2(d_{1} + d_{2} )} (Z; \mathbb{C})$ such that

\begin{equation*}\label{eq.0.2}
\sum_{\lambda}(\iota_{\lambda})_{\ast} \mbox{Res}_{\varphi_{1}, \varphi_{2}} (\mathcal{F} , \mathcal{N}_{\mathcal{F}}; Z_{\lambda} )=
\Big(\varphi_{1}(\mathcal{N}_{12}) \varphi_{2}(\mathcal{N}_{2})\Big) \frown [M]   \ \ \ \mbox{in} \ \ \ H_{2n - 2(d_{1} + d_{2} )} (M; \mathbb{C})
\end{equation*}

\noi where $\iota_{\lambda}$ denotes the embedding of $Z_{\lambda}$ on $M$.
\end{thm}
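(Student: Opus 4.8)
The plan is to follow the now-standard Čech–de Rham / Chern–Weil scheme used by Baum–Bott and by Brasselet–Corrêa–Lourenço, adapting it to the non-integrable (distribution) setting. First I would fix an open cover $\mathcal{U} = \{U_\alpha\}$ of $M$ adapted to $S(\mathcal{F})$: on each $U_\alpha$ disjoint from $S(\mathcal{F})$ the distributions $\mathcal{F}_1 \subset \mathcal{F}_2$ are locally free subbundles of $TM$, so the quotients $\mathcal{N}_{12} = \mathcal{F}_2/\mathcal{F}_1$ and $\mathcal{N}_2 = TM/\mathcal{F}_2$ are genuine vector bundles there. The key point is that, unlike the foliated case, one does \emph{not} need a Bott partial connection coming from the Lie bracket; for a distribution it is enough to have, on $M \setminus S(\mathcal{F})$, a connection $\nabla$ on $\mathcal{N}_{\mathcal{F}}$ (the relevant direct sum of $\mathcal{N}_{12}$ and $\mathcal{N}_2$) that is ``$\mathcal{F}$-basic'' in the weak sense that suffices to force the vanishing of $\varphi_1(\mathcal{N}_{12})\varphi_2(\mathcal{N}_2)$ off the singular set. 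So Step~1 is to produce such a connection: over $M \setminus S(\mathcal{F})$, split $TM$ smoothly as $\mathcal{F}_1 \oplus \mathcal{N}_{12} \oplus \mathcal{N}_2$ and take the connection induced by this splitting; the degeneracy of the associated curvature forms is exactly what makes the relevant Chern forms of the normal sheaves vanish in the required degree range, which is where the hypothesis $p_i > s_1 - s_2$ for some $i$, or $t_j > s_2$ for some $j$, enters (it is the distribution analogue of the Bott vanishing used in the flag Vanishing Theorem quoted above).

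Step~2 is the localization. Choose a second connection $\nabla'$ on $\mathcal{N}_{\mathcal{F}}$ defined on all of $M$ (e.g. any global $C^\infty$ connection), and on $M \setminus S(\mathcal{F})$ use the connection from Step~1. The Bott difference form (the Chern–Simons transgression) $\varphi_1\#\varphi_2(\nabla, \nabla')$ built from the homotopy between the two connections, together with the fact that $\varphi_1(\mathcal{N}_{12})\varphi_2(\mathcal{N}_2)$ computed from $\nabla$ vanishes identically near $S(\mathcal{F})^c$, produces a Čech–de Rham cocycle supported in a neighborhood of $S(\mathcal{F})$. By the standard Čech–de Rham / Alexander duality machinery (as in Suwa's book and in [\ref{BCL}]), this cocycle defines, for each compact connected component $Z_\lambda$ of $S(\mathcal{F})$, a homology class $\mathrm{Res}_{\varphi_1,\varphi_2}(\mathcal{F},\mathcal{N}_{\mathcal{F}},Z_\lambda) \in H_{2n-2(d_1+d_2)}(Z_\lambda;\mathbb{C})$, and one checks this class is independent of the auxiliary choices (cover, partition of unity, the global connection $\nabla'$) by the usual homotopy-of-connections argument.

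Step~3 is the global balancing identity. Since $M$ is compact, the cohomology class of $\varphi_1(\mathcal{N}_{12})\varphi_2(\mathcal{N}_2)$ is globally defined and equals, via the Čech–de Rham isomorphism, the sum over $\lambda$ of the localized contributions pushed forward by $(\iota_\lambda)_*$; capping with the fundamental class $[M]$ gives exactly
\begin{equation*}
\sum_{\lambda}(\iota_{\lambda})_{\ast}\,\mathrm{Res}_{\varphi_1,\varphi_2}(\mathcal{F},\mathcal{N}_{\mathcal{F}},Z_\lambda) = \Big(\varphi_1(\mathcal{N}_{12})\varphi_2(\mathcal{N}_2)\Big)\frown[M].
\end{equation*}
The only subtlety here is checking that the pairing of the compactly-supported transgression class with $[M]$ coincides with the global Chern-number pairing; this is routine once the Čech–de Rham bookkeeping is set up.

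The main obstacle I anticipate is Step~1: justifying that, for a \emph{non-integrable} pair $\mathcal{F}_1 \subset \mathcal{F}_2$, one still gets enough vanishing of the normal Chern forms off $S(\mathcal{F})$ to localize. In the integrable case this is Bott's theorem (the curvature of the basic connection annihilates tangent directions), but for distributions the bracket $[\mathcal{F}_i,\mathcal{F}_i]$ need not lie in $\mathcal{F}_i$, so one must argue more carefully — essentially, that the failure of integrability is itself a section of $\mathrm{Hom}$ into the normal sheaf whose zero locus only enlarges where we already have no control, and that the Chern monomials in the stated degree range still vanish because they only see the ``transverse'' part of the curvature. Making this precise — i.e. pinning down exactly which transgression forms are needed and why the hypothesis on $p_i, t_j$ is sufficient — is the technical heart of the argument; once it is in place, Steps~2 and~3 are a direct transcription of the flag-of-foliations proof in [\ref{BCL}].
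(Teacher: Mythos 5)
The skeleton of your argument (\v{C}ech--de Rham localization, transgression forms, Alexander duality, and the global balancing on compact $M$) is the same machinery the paper uses, but the step you yourself flag as the ``technical heart'' --- the vanishing of $\varphi_{1}(\mathcal{N}_{12})\varphi_{2}(\mathcal{N}_{2})$ off $S(\mathcal{F})$ --- is resolved by you with the wrong mechanism, and as stated it would fail. You propose to build an ``$\mathcal{F}$-basic'' connection from a splitting $TM \simeq \mathcal{F}_{1}\oplus\mathcal{N}_{12}\oplus\mathcal{N}_{2}$ and to argue that a degeneracy of its curvature gives a ``distribution analogue of Bott vanishing.'' For non-integrable distributions there is no such analogue: Bott's vanishing genuinely uses $[\mathcal{F},\mathcal{F}]\subset\mathcal{F}$, and no choice of splitting repairs it. The actual reason the theorem holds --- and the reason its hypothesis is phrased in terms of the individual indices $p_i$, $t_j$ of the Chern monomials rather than the total degrees $d_1$, $d_2$ --- is vanishing by rank: off $S(\mathcal{F})$ the sheaves $\mathcal{N}_{12}$ and $\mathcal{N}_{2}$ are vector bundles of ranks $s_1-s_2$ and $s_2$, and for \emph{any} connection on a rank-$r$ bundle one has $c_p\equiv 0$ as a form whenever $p>r$ (Remark \ref{remark2}). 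So the hypothesis $p_i>s_1-s_2$ for some $i$, or $t_j>s_2$ for some $j$, forces $\varphi_{1}$ or $\varphi_{2}$ to vanish identically on $U_0=U\setminus Z$ with no special connection and no integrability at all; this is exactly Suwa's ``vanishing by rank reason,'' and it is why the statement is restricted to Chern monomials (a high-degree polynomial such as $c_1^{d}$ would not vanish this way). Your reading of the hypothesis as a ``degree range'' condition in the spirit of the flag Vanishing Theorem is the conceptual gap.

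A second, more technical omission: you propose to choose a global $C^\infty$ connection $\nabla'$ on $\mathcal{N}_{\mathcal{F}}$ over all of $M$, but $\mathcal{N}_{\mathcal{F}}=\mathcal{N}_{12}\oplus\mathcal{N}_{2}$ is only a coherent sheaf, not locally free near $S(\mathcal{F})$, so no such connection exists there. The paper handles this by taking Atiyah--Hirzebruch resolutions of $\mathcal{N}_{12}$ and $\mathcal{N}_{2}$ by real analytic vector bundles on a neighborhood $U$ of $Z$ (sequences (\ref{seq1}) and (\ref{seq2})), working with families of connections compatible with these resolutions on $U_0$ (so that the virtual-bundle Chern forms reduce to those of the actual normal bundles there, where the rank vanishing applies), and arbitrary connections on $U_1=U$; the resulting cocycle has vanishing first entry and hence lives in the relative complex, giving the residue via Alexander duality. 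With the rank-vanishing argument and the resolution step put in place, your Steps 2 and 3 do go through as in the flag-of-foliations case.
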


We prove a result about residue of flag of distributions on isolated singularities.

\begin{thm}\label{1.1} Let $\mathcal{F} = (\mathcal{F}_{1}, \mathcal{F}_{2} )$ be a $2$-flag of holomorphic distributions on a compact complex manifold $M$ of dimension $n$, $\varphi_{1} \ and \ \varphi_{2}$ be homogeneous symmetric polynomials, respectively of degrees $d_{1}>0$ and $d_{2}> 0$ and $p$ be an isolated point of $S(\mathcal{F})$. Then

$$Res_{\varphi_{1}, \varphi_{2}} (\mathcal{F} , \mathcal{N}_{\mathcal{F}}; p ) = 0.$$

\end{thm}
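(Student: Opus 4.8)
The plan is to reduce the vanishing of the flag residue at an isolated singular point $p$ to the ordinary Baum--Bott/Grothendieck residues of the two distributions $\fol_1$ and $\fol_2$ separately, and then to observe that positivity of both degrees $d_1, d_2$ forces those individual residues, and hence their combination, to vanish for dimension reasons. Concretely, I would first unwind the definition of $Res_{\varphi_1,\varphi_2}(\fol,\nf,p)$ as it is constructed in the proof of Theorem \ref{1.0.14}: on a small ball $U$ around $p$, choose local $C^\infty$ connections $\nabla_{12}$ and $\nabla_2$ on $\mathcal{N}_{12}$ and $\mathcal{N}_2$ that are adapted (in the Bott sense) to the respective distributions away from $S(\fol)$, so that $\varphi_1(\nabla_{12})$ and $\varphi_2(\nabla_2)$ are closed forms vanishing identically on $U\setminus\{p\}$. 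The flag residue is then represented by the ^Cech--de Rham (or Bott--Chern) localization of the product $\varphi_1(\nabla_{12})\wedge\varphi_2(\nabla_2)$ at $p$, living a priori in $H_{2n-2(d_1+d_2)}(\{p\};\C)=H^{BM}_{2n-2(d_1+d_2)}(\{p\};\C)$.

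The key step is then purely dimensional. Since $p$ is an isolated point, $H_{2n-2(d_1+d_2)}(\{p\};\C)=0$ unless $2n-2(d_1+d_2)=0$, i.e.\ unless $d_1+d_2=n$. So it suffices to treat that single case. Here I would argue that the localized class of the product is the cup product, localized at $p$, of the two separately localized classes: the Bott vanishing for $\nabla_{12}$ localizes $\varphi_1(\nabla_{12})$ to a class in $H_{2n-2d_1}(\{p\};\C)$, and that group is zero as soon as $d_1<n$, which holds because $d_2>0$ forces $d_1=n-d_2<n$. Symmetrically the $\varphi_2$-factor localizes to $H_{2n-2d_2}(\{p\};\C)=0$ since $d_1>0$. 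Because the flag residue factors through (is a refinement of) at least one of these individually localized classes — this is exactly the mechanism by which the product form $\varphi_1(\nabla_{12})\wedge\varphi_2(\nabla_2)$ is exact on the punctured ball: it is the wedge of a form that is itself $d$ of something on $U\setminus\{p\}$ with a closed form — the flag residue vanishes.

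The step I expect to be the main obstacle is making the factorization in the previous paragraph rigorous at the level of the ^Cech--de Rham localization, rather than just at the level of cohomology classes on the punctured ball. One has to check that the homotopy realizing $\varphi_1(\nabla_{12})=d\psi_1$ on $U\setminus\{p\}$ can be chosen so that $\psi_1\wedge\varphi_2(\nabla_2)$ has support behaving well under the collapsing map to $p$, so that the resulting relative class is genuinely the image of something in $H_{2n-2d_1}(\{p\};\C)\otimes(\text{closed forms})$ and therefore dies. This is where the hypothesis $d_1,d_2>0$ is essential: if one of the degrees were $0$ the corresponding factor would be a constant (a multiple of $1\in H^0$), there would be nothing to localize on that side, and the argument would collapse — which is consistent with the fact that for $d_2=0$ the statement reduces to the classical nonvanishing Baum--Bott residue. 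I would close by noting that the same argument, with $\varphi_i$ a Chern monomial and $\mathcal{N}_i$ possibly non-locally-free, goes through verbatim using the distribution version of Bott vanishing established in the course of proving Theorem \ref{1.0.14}.
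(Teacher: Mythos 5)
There is a genuine gap, and it sits in exactly the case that matters. Your dimension count correctly kills the residue whenever $d_1+d_2\neq n$, since it lives in the homology of the point $\{p\}$. But the remaining case $d_1+d_2=n$ is the only nontrivial one, and it is precisely the case the paper needs later (on $\mathbb{P}^3$ with $c_1\varphi_2$ of degree $3$). For that case your argument rests on two unproved claims. First, you invoke a ``Bott vanishing'' for $\nabla_{12}$ to localize $\varphi_1(\mathcal{N}_{12})$ at $p$; but the $\mathcal{F}_i$ are only distributions (no integrability, so no Bott vanishing) and $\varphi_1,\varphi_2$ are arbitrary symmetric polynomials of positive degree (so the rank-type vanishing used in Theorem \ref{1.0.14} is unavailable either). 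The exactness of $\varphi_1$ on the punctured ball does hold, but only for the topological reason $H^{2d_1}(U\setminus\{p\};\mathbb{C})=0$ for $0<d_1<n$, not by any adapted-connection mechanism. Second, even granting exactness, passing from ``$\varphi_1(\nabla_{12})=d\psi_1$ on $U\setminus\{p\}$'' to vanishing of the \emph{relative} (\v{C}ech--de Rham) class in $H^{2n}(U,U\setminus\{p\};\mathbb{C})\cong\mathbb{C}$ requires producing a relative coboundary compatible with the specific cocycle defining the flag residue; this is the factorization you yourself flag as the main obstacle, and it is not supplied. Note that $H^{2n}(U,U\setminus\{p\};\mathbb{C})\neq 0$, so no dimension argument can finish this case.

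The paper's proof takes a different and much simpler route that you may want to adopt: since $p$ is isolated, one may shrink $U$ to a ball on which all the real-analytic bundles $E_i^{12},E_j^{2}$ appearing in the resolutions of $\mathcal{N}_{12}$ and $\mathcal{N}_2$ are trivial, hence trivial on $U_0=U\setminus\{p\}$ as well. Choosing the flat connections attached to these trivializations on \emph{both} $U_0$ and $U_1=U$ (Remark \ref{remark1} and Lemma \ref{lemma 3.4}), every Chern form of positive degree vanishes identically, so each entry of the cocycle representing $\varphi_1(\mathcal{N}_{12})\smallsmile\varphi_2(\mathcal{N}_2)$ in $A^{2(d_1+d_2)}(U,U_0)$ is zero --- the hypotheses $d_1>0$ and $d_2>0$ enter exactly here, once for each factor in the two mixed terms --- and therefore the localized class, hence the residue, vanishes with no case distinction on $d_1+d_2$ at all.
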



With this tool on the hand we consider a $2$-flag on $\mathbb{P}^{3}$ and we prove the following.

\begin{thm}\label{1.2} Let $\mathcal{F} = (\mathcal{F}_{1}, \mathcal{F}_{2} )$ be a $2$-flag of holomorphic foliations on $\mathbb{P}^{3}$ with $\deg(\mathcal{F}_{i}) = d_{i}$ thus

\begin{equation}\label{2}
(1 + d_{1} - d_{2})\sum_{Z \in S_{1}(\mathcal{F}_{2})}\deg(Z)Res_{\varphi_{2}}(\mathcal{F}_{2}|_{B_{p}}; p)=\sum_{Z \in S(\mathcal{F})} Res_{c_{1}\varphi_{2}}(\mathcal{F}, \mathcal{N}_{\mathcal{F}}; Z),
\end{equation}

\noindent where $\deg(Z)$ is the degree of the irreducible component $Z$, $Res_{\varphi_{2}}(\mathcal{F}_{2}|_{B_{p}}; p)$ represents the Grothendieck residue of the foliation $\mathcal{F}_{2}|_{B_{p}}$ at $\{p \} = Z \cap B_{p}$  with $B_{p}$ a transversal ball and either $\varphi_{2}=c_{1}^{2}$ or $\varphi_{2}=c_{2}.$

\end{thm}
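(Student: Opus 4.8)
The strategy is to apply the Residue Theorem for flags of distributions (Theorem~\ref{1.0.14}) with the Chern monomial data $\varphi_1 = c_1$ and $\varphi_2$ equal to either $c_1^2$ or $c_2$, and to compare both sides of the resulting identity term by term against the classical Baum--Bott computation for the single foliation $\mathcal{F}_2$. On $\mathbb{P}^3$ a flag $\mathcal{F} = (\mathcal{F}_1, \mathcal{F}_2)$ of foliations with $\deg(\mathcal{F}_i) = d_i$ has $\mathcal{F}_1$ a one-dimensional foliation and $\mathcal{F}_2$ a two-dimensional foliation (equivalently codimension $s_1 = 2$, $s_2 = 1$), so $\mathcal{N}_{12} = \mathcal{N}_{\mathcal{F}_1}/\mathcal{N}_{\mathcal{F}_2}$ is a line bundle and $\mathcal{N}_2 = \mathcal{N}_{\mathcal{F}_2}$ is a line bundle; their first Chern classes are computed from the degrees $d_1$ and $d_2$ via the standard Euler-sequence bookkeeping on $\mathbb{P}^3$, giving $c_1(\mathcal{N}_{12}) = (1 + d_1 - d_2)h$ and $c_1(\mathcal{N}_2) = (d_2 + 2)h$ (with $h$ the hyperplane class), the precise constants to be pinned down from the definitions of degree used in the paper. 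With these, the left-hand side of the flag residue identity for $\varphi_1\varphi_2 = c_1 \varphi_2$ becomes $\big(c_1(\mathcal{N}_{12})\,\varphi_2(\mathcal{N}_2)\big)\frown[\mathbb{P}^3] = (1 + d_1 - d_2)\big(\varphi_2(\mathcal{N}_2)\frown[\mathbb{P}^3]\big)$ after pulling the scalar $c_1(\mathcal{N}_{12})$ out, since $\mathcal{N}_{12}$ is a line bundle.

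Next I would identify the right-hand side. By Theorem~\ref{1.0.14}, the class $\varphi_1(\mathcal{N}_{12})\varphi_2(\mathcal{N}_2)\frown[\mathbb{P}^3]$ is localized as $\sum_{Z \in S(\mathcal{F})} Res_{c_1\varphi_2}(\mathcal{F}, \mathcal{N}_{\mathcal{F}}, Z)$, and by Theorem~\ref{1.1} the contributions from the isolated points of $S(\mathcal{F})$ all vanish (here $d_1 = 1$ and $d_2 = 2$ as polynomial degrees are positive, so the hypothesis of Theorem~\ref{1.1} applies). Hence only the one-dimensional components $Z \in S_1(\mathcal{F})$ survive, which is exactly the right-hand side of~(\ref{2}). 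For the left-hand side I would invoke the classical Baum--Bott formula for the two-dimensional foliation $\mathcal{F}_2$ on $\mathbb{P}^3$: since $\dim S(\mathcal{F}_2) \le 1$ and $\varphi_2$ has degree $2 = n - k + 1 = 3 - 2 + 1$, the residue theorem of Baum--Bott (together with the effective formula of Corr\^ea--Louren\c co recalled in the introduction) gives $\varphi_2(\mathcal{N}_{\mathcal{F}_2})\frown[\mathbb{P}^3] = \sum_{Z \in S_1(\mathcal{F}_2)} \deg(Z)\,Res_{\varphi_2}(\mathcal{F}_2|_{B_p}; p)$, where the isolated singularities of $\mathcal{F}_2$ alone contribute in degree $H_0$ and drop out of the $H_2$-identity we are after, or are absorbed by working with the appropriate homological degree. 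Combining these two identifications yields~(\ref{2}).

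The main obstacle I anticipate is bookkeeping rather than conceptual: matching the normalizations so that $c_1(\mathcal{N}_{12})$ genuinely equals $(1 + d_1 - d_2)h$ under the paper's conventions, and correctly tracking which homological degree the identity lives in so that the isolated-singularity contributions on \emph{both} sides (the flag side via Theorem~\ref{1.1}, the single-foliation side via the degree count) are legitimately discarded. A secondary subtlety is justifying that $\varphi_2(\mathcal{N}_2)$ appearing in the flag residue (where $\mathcal{N}_2$ is the quotient/normal sheaf in the flag sense) coincides with $\varphi_2(\mathcal{N}_{\mathcal{F}_2})$ of the ordinary Baum--Bott setup for $\mathcal{F}_2$; this should be immediate from the construction of the flag connections restricting to the usual Bott connection on the $\mathcal{F}_2$-factor, but it must be stated. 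Once the scalar $c_1(\mathcal{N}_{12})$ is factored out and the two residue theorems are aligned, the identity~(\ref{2}) follows directly.
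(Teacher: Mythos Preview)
Your proposal is correct and follows the same route as the paper: apply Theorem~\ref{1.0.14}, kill the isolated-point contributions via Theorem~\ref{1.1}, compute $c_1(\mathcal{N}_{12})=(1+d_1-d_2)h$ from the exact sequences, and invoke Baum--Bott together with the Corr\^ea--Louren\c co transversal-disc formula for $\mathcal{F}_2$. The one bookkeeping point to fix is that $c_1(\mathcal{N}_{12})$ equals $(1+d_1-d_2)h$, not a pure scalar, and it is the residual factor of $h$ paired against $[Z]$ (the paper writes this as $\int_{\mathbb{P}^3} h\,\eta_Z=\deg(Z)$) that produces the factor $\deg(Z)$---it does \emph{not} come from the Baum--Bott identity for $\mathcal{F}_2$ alone, which only gives $\varphi_2(\mathcal{N}_2)\frown[\mathbb{P}^3]=\sum_{Z} Res_{\varphi_2}(\mathcal{F}_2|_{D};p)\,[Z]$ in $H_2$.
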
 
 
This previous result is a partial advance in goal of calculates the residue of flags and it is an effective way to calculate this residue when $S(\mathcal{F})$ has only one irreducible component.

For the last result of this paper we need the following definition due to G. N. Costa, see [\ref{Costa}]. Let $\mathcal{F}$ be a holomorphic distribution on $M$ with singular set $\{p_{1}, \dots , p_{r}\} \cup C$, where  $\{p_{1}, \dots , p_{r}\}$ are isolated points and $C$ is an irreducible smooth curve,   and let $\pi : \tilde{M} \longrightarrow M$ be the blow up morphism along $C$ with exceptional divisor $\mathscr{C}=\pi^{-1}({C}$). We say that $\mathcal{F}$ is a {\it special holomorphic distribution} if the pull-back distribution $\tilde{\mathcal{F}}$ on $\tilde{M}$ has only isolated singularities and $\mathscr{C}$ is an invariant set.  We show,  for special distribution, a relation between the degrees of the distributions in the flag, the tangency order of the distributions, the Euler characteristic and the degree of the curve $C$.

\begin{thm}\label{thm 04} Let $\mathcal{F} = (\mathcal{F}_{1}, \mathcal{F}_{2})$ be a $2$-flag of holomorphic distributions on $\mathbb{P}^{3}$ satisfying the conditions:

\begin{enumerate}

 \item $S(\mathcal{F}_{1}) = \{ isolated \ \ points \},$
 
 \item $S(\mathcal{F}_{2}) = \{ isolated \ \ points \} \cup \{ C \}$, where $C$ is an irreducible smooth curve,
 
 \item $\mathcal{F}_{2}$ is special along $C$.

\end{enumerate}

\noindent With this information we have the following relation 
$$ \deg(C)\Big [\Big(1+d_{1}-d_{2}\Big)\Big(-l_{2}(2+3l_{2})+2\Big) + (2l_{2}-l_{1})\Big(-3l_{2}(2+4l_{2}-d_{2})+2 +d_{2} \Big)\Big] = 
$$
$$-\chi(C)\Big(-l_{2}(2+3l_{2})+2\Big)(2l_{2}-l_{1}),$$ 
\noindent where $d_{i} = \deg(\mathcal{F}_{i})$, \ \ $l_{i} = tang(\pi^{\ast}\mathcal{F}, \mathscr{C})$ for blow up $\pi$ along the curve $C$ with exceptional divisor $\mathscr{C}$ and $\chi(C)$ the Euler characteristic of $C$.
\end{thm}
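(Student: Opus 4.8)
The plan is to derive the relation in Theorem~\ref{thm 04} by specializing Theorem~\ref{1.2} to the geometric situation described by hypotheses (1)--(3), and then plugging in explicit residue computations coming from the blow-up along $C$. First I would observe that by hypotheses (1) and (2), the only one-dimensional component of $S(\mathcal{F})=S(\mathcal{F}_1)\cup S(\mathcal{F}_2)$ is the curve $C$, so in formula (\ref{2}) the sums over $S_1(\mathcal{F}_2)$ and $S_1(\mathcal{F})$ each collapse to the single term indexed by $C$. Thus (\ref{2}) becomes
\begin{equation*}
(1+d_1-d_2)\,\deg(C)\,Res_{\varphi_2}(\mathcal{F}_2|_{B_p};p) = Res_{c_1\varphi_2}(\mathcal{F},\mathcal{N}_{\mathcal{F}},C),
\end{equation*}
and I would make the two natural choices $\varphi_2=c_2$ on the left (a single Chern monomial of degree $2$, which is legitimate for the codimension-$2$ conormal data of a rank-$1$ distribution transverse to $C$) so that $c_1\varphi_2=c_1c_2$ has degree $3=\dim\mathbb{P}^3$ on the right, giving a genuine number.

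The second step is to compute each side using the hypothesis that $\mathcal{F}_2$ is \emph{special} along $C$. Let $\pi\colon\tilde{M}\to\mathbb{P}^3$ be the blow-up along $C$ with exceptional divisor $E$; by definition $\tilde{\mathcal{F}}=\pi^*\mathcal{F}_2$ has only isolated singularities and $E$ is invariant. The transversal Grothendieck residue $Res_{\varphi_2}(\mathcal{F}_2|_{B_p};p)$ is a local invariant of $\mathcal{F}_2$ along $C$, and the standard technique (as in Costa's work and in Corr\^ea--Louren\c co) is to read it off the restricted foliation $\tilde{\mathcal{F}}|_E$: passing to the exceptional divisor $E\cong\mathbb{P}(N_{C/\mathbb{P}^3})$, a $\mathbb{P}^1$-bundle over $C$, one expresses the residue in terms of the tangency order $l_2=tang(\pi^*\mathcal{F},E)$ and the Euler characteristic $\chi(C)$. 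I expect the coefficient $\bigl(-l_2(2+3l_2)+2\bigr)$ to emerge precisely as this transversal residue (or as $\chi(C)$ times it, after integrating over $E$), while the term $(2l_2-l_1)\bigl(-3l_2(2+4l_2-d_2)+2+d_2\bigr)$ should come from the flag residue $Res_{c_1c_2}(\mathcal{F},\mathcal{N}_{\mathcal{F}},C)$ on the right, computed by the same blow-up applied now to the pair $(\mathcal{F}_1,\mathcal{F}_2)$ and using that $S(\mathcal{F}_1)$ is isolated so that $\tilde{\mathcal{F}}_1$ contributes through $l_1$. The degrees $d_i$ enter through the Chern classes of the normal sheaves $\mathcal{N}_{12}$ and $\mathcal{N}_2$ on $\mathbb{P}^3$, which are $\mathcal{O}(d_i+\text{const})$ twists, so $c_1(\mathcal{N}_2)$ and $c_1(\mathcal{N}_{12})$ are explicit linear forms in $d_1,d_2$.

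The third step is purely bookkeeping: substitute the two residue expressions into the collapsed form of (\ref{2}), move everything to one side, and factor. The factor $\deg(C)$ multiplies the two bracketed contributions coming from the left side and from the part of the right side proportional to $\deg(C)$, while the remaining term of the right-side residue — the one carrying no $\deg(C)$ but instead the genus/Euler-characteristic data of $C$ itself — produces the $-\chi(C)\bigl(-l_2(2+3l_2)+2\bigr)(2l_2-l_1)$ on the right-hand side of the claimed identity. Writing this out and collecting coefficients of $(1+d_1-d_2)$ and of $(2l_2-l_1)$ gives exactly the stated equation.

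The main obstacle, I expect, is the explicit evaluation of the flag residue $Res_{c_1c_2}(\mathcal{F},\mathcal{N}_{\mathcal{F}},C)$ along a non-isolated (one-dimensional) component: unlike Theorem~\ref{1.1}, which kills the residue only at \emph{isolated} points, here one genuinely needs a Grothendieck-type formula for the flag residue transverse to a curve, and then one must carry out the blow-up computation on the ruled surface $E$ keeping careful track of the normal bundle $N_{C/\mathbb{P}^3}$, the twisting sheaves, and the tangency indices $l_1,l_2$ along $E$. Getting the precise polynomial coefficients $-l_2(2+3l_2)+2$ and $-3l_2(2+4l_2-d_2)+2+d_2$ right is where all the delicate local intersection theory lives; the rest of the argument is a formal consequence of Theorem~\ref{1.2}.
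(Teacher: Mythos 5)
There is a genuine gap: your argument reduces the theorem to evaluating two residues along $C$ — the transversal Grothendieck residue $Res_{c_{2}}(\mathcal{F}_{2}|_{B_{p}};p)$ and the flag residue $Res_{c_{1}c_{2}}(\mathcal{F},\mathcal{N}_{\mathcal{F}},C)$ — and then never evaluates either of them. The sentences ``I expect the coefficient $-l_{2}(2+3l_{2})+2$ to emerge as this transversal residue'' and ``the term $(2l_{2}-l_{1})(\cdots)$ should come from the flag residue'' are precisely the content that would have to be proved; you yourself flag this as the main obstacle, and no mechanism is offered to overcome it (there is no Grothendieck-type formula for flag residues along a one-dimensional component in the paper or in your outline). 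Moreover, Theorem \ref{1.2} alone can never produce the stated relation: after the sums collapse to $C$, it is an identity between two unknown quantities, and it gives no equation among $d_{1},d_{2},l_{1},l_{2},\deg(C),\chi(C)$ unless some independent vanishing or evaluation is injected.

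The missing idea is exactly that vanishing, and it is where the specialness hypothesis is used in the paper's proof, which bypasses Theorem \ref{1.2} entirely. Since $\mathcal{F}_{2}$ is special along $C$, the pulled-back flag $\tilde{\mathcal{F}}=(\tilde{\mathcal{F}_{1}},\tilde{\mathcal{F}_{2}})$ on the blow-up $\tilde{\mathbb{P}^{3}}$ has only isolated singularities; applying Theorem \ref{1.0.14} together with Theorem \ref{1.1} \emph{upstairs} (all residues at isolated points vanish) yields
$$\int_{\tilde{\mathbb{P}^{3}}} c_{1}(\tilde{\mathcal{N}}_{12})\,c_{2}(\tilde{\mathcal{N}}_{2})=0,$$
and the stated relation is nothing but the explicit evaluation of this integral, using $\tilde{\mathcal{F}_{i}}=\pi^{\ast}\mathcal{F}_{i}\otimes[E]^{l_{i}}$, the turtle-diagram exact sequences, Porteous' formula for $c_{2}(T\tilde{\mathbb{P}^{3}})$, the formula $c_{2}(\mathcal{F}_{2})=\bigl(2+d_{2}^{2}-\deg(C)\bigr)h^{2}$ from Calvo-Andrade--Corr\^ea--Jardim, and the intersection identities on the blow-up (in particular $\int_{\tilde{\mathbb{P}^{3}}}E^{3}=\chi(C)-4\deg(C)$, which is where $\chi(C)$ enters). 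Your proposal invokes the blow-up only heuristically, to ``read off'' residues, and never uses the key fact that on $\tilde{\mathbb{P}^{3}}$ the total flag residue is zero; without that, the bookkeeping in your third step has nothing to balance against, so the argument as outlined does not close.
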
 

\begin{corollary}\label{Coro 1.5} If $2l_{2} = l_{1}$ we have
$$  d_{2} = d_{1}+1. $$
\end{corollary}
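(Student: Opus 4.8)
The plan is to obtain this as a direct specialization of the identity in Theorem~\ref{thm 04}. First I would substitute the hypothesis $l_1 = 2l_2$ into the displayed relation of that theorem. The key observation is that this forces the factor $(2l_2 - l_1)$ to vanish identically. Consequently the entire right-hand side $-\chi(C)\big(-l_2(2+3l_2)+2\big)(2l_2-l_1)$ becomes zero (so that $\chi(C)$ drops out of the discussion entirely), and the second summand $(2l_2-l_1)\big(-3l_2(2+4l_2-d_2)+2+d_2\big)$ inside the bracket on the left also disappears. What remains is the single equation
$$ \deg(C)\,\big(1+d_1-d_2\big)\big(-l_2(2+3l_2)+2\big) = 0, $$
where $d_i = \deg(\mathcal{F}_i)$ as in the theorem.

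Next I would show that the two outer factors cannot vanish. Since $C$ is a nonempty irreducible curve in $\mathbb{P}^3$, we have $\deg(C) \geq 1 > 0$. For the remaining factor, write $-l_2(2+3l_2)+2 = -(3l_2^2 + 2l_2 - 2)$. The tangency order $l_2 = tang(\pi^\ast\mathcal{F},E)$ is a nonnegative integer; the expression takes the value $2$ when $l_2 = 0$ and is at most $-3$ whenever $l_2 \geq 1$, so it is never zero. Cancelling both nonzero factors from the equation above leaves $1 + d_1 - d_2 = 0$, i.e. $d_2 = d_1 + 1$, which is the assertion of the corollary.

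I do not anticipate a genuine obstacle here, as the statement is essentially an algebraic consequence of Theorem~\ref{thm 04}; the only point deserving care is the non-vanishing of the integer polynomial $3l_2^2 + 2l_2 - 2$, whose roots $\frac{-1 \pm \sqrt{7}}{3}$ are irrational, so that no borderline subcase for $l_2$ needs to be treated separately.
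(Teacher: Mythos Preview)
Your argument is correct and matches the paper's intended derivation: the authors state only that Corollaries~\ref{Coro 1.5} and~\ref{Coro 1.6} are ``immediate'' consequences of Theorem~\ref{thm 04}, and your substitution of $2l_2 - l_1 = 0$ into that identity is exactly this. You in fact supply more than the paper does, namely the verification that $\deg(C)>0$ and that $3l_2^2 + 2l_2 - 2$ has no integer roots, so that one may legitimately cancel to conclude $1+d_1-d_2=0$.
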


\begin{corollary}\label{Coro 1.6} If $2l_{2} \neq l_{1}$ we get an expression of the Euler characteristic of the curve $C$.

$$ \chi(C) =  \deg(C) \Big[ \dfrac{1+d_{1}-d_{2}}{2l_{2}-l_{1}}  + \dfrac{2+d_{2} -3l_{2}(2+4l_{2}-d_{2})}{2-l_{2}(2+3l_{2})}\Big].$$

\end{corollary}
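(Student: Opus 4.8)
The plan is to derive Corollary \ref{Coro 1.6} as a purely algebraic consequence of Theorem \ref{thm 04}, by solving the relation there for $\chi(C)$. To keep the bookkeeping transparent I would first abbreviate the two factors that govern the computation, writing $A := 2 - l_2(2+3l_2)$ and $Q := 2l_2 - l_1$. With this notation the identity of Theorem \ref{thm 04} takes the shape
$$\deg(C)\Big[(1+d_1-d_2)\,A + Q\,\big(2+d_2-3l_2(2+4l_2-d_2)\big)\Big] = -\,\chi(C)\,A\,Q,$$
in which the right-hand side already presents itself as the product $-\chi(C)\,A\,Q$, while the bracket on the left is visibly a sum of a multiple of $A$ and a multiple of $Q$.

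The key step is then to solve this identity for $\chi(C)$. Under the stated hypothesis $2l_2 \neq l_1$, i.e. $Q \neq 0$, and provided $A \neq 0$ (equivalently $l_2(2+3l_2)\neq 2$, a condition that is needed in any case for the target expression to be defined), the coefficient of $\chi(C)$ on the right, namely $-A\,Q$, is nonzero, so I may divide through by it. The bracket on the left is a sum of one term proportional to $A$ and one proportional to $Q$; dividing by $A\,Q$ therefore splits it into the two single fractions $(1+d_1-d_2)/Q$ and $\big(2+d_2-3l_2(2+4l_2-d_2)\big)/A$. Restoring the abbreviations $A$ and $Q$ and collecting the two fractions yields
$$\chi(C) = \deg(C)\left[\dfrac{1+d_1-d_2}{2l_2-l_1} + \dfrac{2+d_2-3l_2(2+4l_2-d_2)}{2-l_2(2+3l_2)}\right],$$
which is exactly the asserted expression.

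Since every manipulation here is elementary, there is no genuine analytic difficulty; the only points demanding attention are bookkeeping ones. I would make explicit that two denominators occur, so that the statement implicitly presupposes both $Q = 2l_2 - l_1 \neq 0$ (the stated hypothesis) and $A = 2 - l_2(2+3l_2) \neq 0$, ensuring that each of the two fractions is legitimate. The one place where care is genuinely required is the global sign: the coefficient of $\chi(C)$ on the right of Theorem \ref{thm 04} is $-A\,Q$ rather than $A\,Q$, so I would track this minus sign through the division to be certain the final formula is reproduced with the correct orientation.
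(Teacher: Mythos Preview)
Your approach is exactly what the paper intends: it declares the corollary ``immediate'' from Theorem~\ref{thm 04} and gives no separate argument, so solving that identity for $\chi(C)$ is precisely the expected route. Your observation that the formula also requires $A=2-l_2(2+3l_2)\neq 0$ is a useful caveat the paper does not make explicit.

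There is, however, a sign slip in your algebra. From your displayed identity
\[
\deg(C)\big[(1+d_1-d_2)A + Q\,B\big] \;=\; -\,\chi(C)\,A\,Q,
\]
dividing by $AQ$ gives $\deg(C)\big[(1+d_1-d_2)/Q + B/A\big] = -\chi(C)$, so one obtains the corollary's formula with an overall minus sign, not the stated positive one. The resolution is that the minus sign on the right-hand side of Theorem~\ref{thm 04} as \emph{stated} appears to be a typo: the final line of the paper's own proof of that theorem reads
\[
\deg(C)\big[\cdots\big]=\chi(C)(2l_2-l_1)\big(-l_2(2+3l_2)+2\big),
\]
i.e.\ $+\chi(C)AQ$, and it is this version that yields Corollary~\ref{Coro 1.6} with the correct sign. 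So your method is right; you should just flag that the sign of the right-hand side in the theorem statement must be taken as in the proof, and then your division produces the corollary exactly.
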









\section{Flag of Holomorphic Distributions}

Let us begin by recall the basic material and results in singular holomorphic foliations and distributions. Let $M$ be a complex manifold of dimension $n$ and $\Theta_{M}$ be the sheaf of germs of holomorphic vector fields. For this section we refer to [\ref{BCL}, \ref{Mol}].

A singular holomorphic distribution of dimension $k$ on $M$ is a coherent subsheaf $\mathcal{F}$ of $\Theta_{M}$ of rank $k$.

If $\mathcal{F}$ satisfies the following integrability condition

$$ [\mathcal{F}_{x} , \mathcal{F}_{x}] \subset \mathcal{F}_{x} \ \ \ \mbox{for} \ \  \mbox{all } x \in M,$$

\noi we say that $\mathcal{F}$ is a holomorphic foliation. The normal sheaf of $\mathcal{F}$ is defined by $\mathcal{N}_{\mathcal{F}} := \Theta_{M}/ \mathcal{F}$, such that it is torsion free ( it means that $\mathcal{F}$ is
saturated).
With this definition we have the following exact sequence 
\begin{center} $0  \longrightarrow \mathcal{F}  \longrightarrow  \Theta_{M}  \longrightarrow \mathcal{N}_{\mathcal{F}} \longrightarrow 0.$
\end{center}

We define the singular set of the distribution $\mathcal{F}$ by  
\begin{center}
    $S(\mathcal{F}) := Sing(\mathcal{N}_{\mathcal{F}} ) = \{p \in M ; \mathcal{N}_{\mathcal{F},p} \  \mbox{is} \ \ \mbox{not} \  \mbox{locally}   \ \mbox{free} \}.$
\end{center}

We assume that $\mbox{codim}(S\mathcal{(F)}) \geq 2.$

In [\ref{BB1}] P. Baum and R. Bott developed a general residue theory for singular holomorphic foliations on $M$ using differential geometry. More precisely, let $\mathcal{F}$ be a holomorphic foliation of dimension $k$ on $M$ and $\varphi$ be a homogeneous symmetric polynomial of degree $d$ satisfying $n-k < d \leq n$. Let $Z$ be a compact connected component of the singular set $S(\mathcal{F})$. Then, there exists a homology class, called residue $Res_{\varphi}( \mathcal{F}; Z) \in H_{2(n-d)} (Z; \mathbb{C})$ such
that it depends only on $\varphi$ and on the local behavior of the leaves of $\mathcal{F}$ near Z, satisfying

\begin{center}
$ \varphi(\mathcal{N}_{\mathcal{F}})\frown [M] = \sum_{Z} Res_{\varphi}( \mathcal{F}; Z). $
\end{center}

In [\ref{Su1}] Suwa developed a residue theory for holomorphic distributions using certain Chern polynomials,
and the residues arise from the vanishing by rank reason, instead of foliations that use the Vanishing Theorem. 




Now we can define flags of holomorphic distributions. For this consider $M$ a complex manifold of dimension $n$.

\begin{definition}\label{def.2.1} Let $\mathcal{F}_{1},\mathcal{F}_{2}$ be two holomorphic distributions on $M$ of dimensions $q = (q_{1},q_{2})$. We say that $\mathcal{F} := (\mathcal{F}_{1},\mathcal{F}_{2})$ is a $2$-flag of holomorphic distributions if $\mathcal{F}_{1}$ is a coherent
sub $\mathcal{O}_{M}$-module of $\mathcal{F}_{2}.$ Furthermore if each $\mathcal{F}_{i}$ is integrable we say that $\mathcal{F}$ is a $2$-flag of holomorphic foliations.

\end{definition}

We note that, for $ x \in M \setminus \cup_{i = 1}^{2} S (\mathcal{F}_{i})$ the
inclusion relation $T_{x}\mathcal{F}_{1} \subset T_{x}\mathcal{F}_{2}$ holds, giving that the leaves of
$\mathcal{F}_{1}$ are contained in leaves of $\mathcal{F}_{2}$, when we have integrability. Here $T\mathcal{F}_{i}$ is the tangent sheaf of the distribution $\mathcal{F}_{i}$, but throughout the text we will abuse of notation and denote it simply by $\mathcal{F}_{i}$. Now we observe that we have a diagram of short exact sequences of sheaves, called "turtle diagram".

$$ \xymatrix{ 0 \ar[rd]  &  & 0 \ar[ld] &  & 0  \\
   & \mathcal{F}_{1} \ar[rd] \ar[dd] &  & \mathcal{N}_{2} \ar[lu] \ar[ru] &  \\
   &  & \Theta_{M} \ar[ru] \ar[rd]  &  &  \\
   & \mathcal{F}_{2} \ar[ru]   \ar[rd] &  & \mathcal{N}_{1} \ar[uu]  \ar[rd] &  \\
 0 \ar[ru] &  & \mathcal{N}_{12} \ar[ru] \ar[rd]  &  & 0 \\
  & 0 \ar[ru] &   &  0 &  }$$

We define the singular set $S(\mathcal{F})$ of the flag
$\mathcal{F}$ to be the analytic set $S(\mathcal{F}_{1})\cup S(\mathcal{F}_{2})$ and $\mathcal{N}_{\mathcal{F}} := \mathcal{N}_{12} \oplus
\mathcal{N}_{2}$ to be the normal sheaf of the flag, where
$\mathcal{N}_{12}$ is the relative quotient sheaf $ \mathcal{F}_{2} / \mathcal{F}_{1}$.

\section{Chern-Weil Theory of Characteristic Class}

In this section we present the basic tools for working with residue of flags. The residue theory was developed firstly by Baum-Bott by using differential geometry. Lehman and Suwa on the decade of 1980 and 1990 present a new approach of residue theory using Chern-Weil theory. We use this last approach, for more details see [\ref{Suwa2}].

\begin{definition} A connection for a complex vector bundle $E$ on $M$ is a $\mathbb{C}$-linear map
$$ \nabla : A^{0} (M,E) \longrightarrow A^{1} (M, E) $$

\noi that satisfies

$$ \nabla (f.s) = df \otimes s + f. \nabla (s) \ \ \mbox{for} \ \ f \in A^{0}(M) \ \ \mbox{and} \ \ s \in A^{0}(M,E).$$

\end{definition}
If $\nabla$ is a connection for $E$, then it induces a $\mathbb{C}$-linear map
$$ \nabla := \nabla^{2} : A^{1}(M,E) \longrightarrow A^{2}(M,E) $$

\noi satisfying
$$ \nabla (\omega \otimes s) = d\omega \otimes s - \omega \wedge \nabla (s), \ \ \omega \in A^{1}(M), \ \ s \in A^{0} (M,E).$$

\begin{definition} The composition $K := \nabla \circ \nabla : A^{0} (M,E) \longrightarrow A^{2} (M,E)$ is called the curvature of the connection $\nabla$.

\end{definition}

If $\nabla$ denotes a connection for a vector bundle $E$ of rank $r$ and $E$ is trivial on the open set $U$, i.e., $E|_{U} \simeq U \times \mathbb{C}^{r}$ and if $ s = ( s_{1},\ldots,s_{r})$ is a frame of $E$ on $U$, then we can write
$$ \nabla (s_{i}) = \sum_{j = 1}^{r} \theta_{ij} \otimes s_{j} \ \ ; \ \ \theta_{ij} \in A^{1}(U). $$

The connection matrix with respect to $s$ is $\theta = (\theta_{ij})$ . Also, using the curvature definition, we get
$$ K(s_{i}) = \sum_{j = 1}^{r}K_{ij}s_{j}, \ \ \ \mbox{where} \ \ \ K_{ij} = d \theta_{ij} - \sum_{k =1}^{r} \theta_{ik} \wedge \theta_{kj}.$$

The curvature matrix with respect to the frame $s$ is $K = (K_{ij})$. Now, to define the Chern class of a vector bundle $E$, we consider $\sigma_{i}, i = 1,\ldots,r$ the $i$-th elementary symmetric functions in the eigenvalues of the matrix $K$
$$ \det(It + K) = 1 + \sigma_{1}(K)t + \sigma_{2}(K)t^{2} + \cdots + \sigma_{r}(K)t^{r}. $$
Next, we define a $2i$-form of Chern $c_{i}$ on $U$ by
$$ c_{i}(K) := \sigma_{i} \Big(\frac{\sqrt{-1}}{2 \pi}K\Big). $$

In general, if $\varphi$ is a symmetric polynomial  in $r$ variables of degree $d$, we can write $\varphi = \tilde{P}(c_{1},\ldots,c_{r})$ for some polynomial $\tilde{P}$. Then we can define
$$ \varphi(K) := \tilde{P}(c_{1}(K),\ldots,c_{r}(K))$$
\noi which is a closed form on $M$. Therefore, we have a cohomology class of $E$ on $M$, $\varphi (E) := \varphi(K) \in H^{2d}(M; \mathbb{C})$.

\begin{rmk}\label{remark2} Observe that, by a question of rank, if $E$ is  a complex vector bundle of rank $r<n$,
for an arbitrary connection $\nabla$ for $E$  $$ c_p (\nabla)\equiv 0$$ for $r<p\leq n$. 
Thus for a Chern monomial $\varphi =c_{p_1}\cdots c_{p_k}$, if $r<p_i\leq n$ for some $i$, $\varphi(\nabla)\equiv 0$.

\end{rmk}

\begin{rmk}\label{remark1} If $U$ is an open trivializing the vector bundle $E$ and $(s_1,\dots,s_r)$ is a frame for $E$ on $U$, we can define a (local)connection for $E$ on $U$ 
simply doing $\nabla(s_i)=0$ for all i. It is easy to see that the curvature matrix $K$ of this connection is a null matrix, so all Chern class $c_i(E)=0$, for $i>0$ on $H^{2i}(U; \mathbb{C}).$
\end{rmk}

Now let  $E_i$, $i=0,\ldots,q$ be a family of complex vector bundle on a complex manifold $M$, $ \xi$ be the virtual bundle $\displaystyle \xi=\sum _{i=0} ^{q}(-1)^i E_i$ and $\varphi$ be a  homogeneous symmetric polynomial. By definition (\ref{Suwa2}),  we have that $$\varphi(\xi)=\sum _{l} \varphi_l ^{(0)} (E_0)\varphi_l ^{(1)}(E_1)\cdots\varphi_l ^{(q)}(E_q),$$ where $\varphi_l ^{(i)}(E_i)$ is a polynomial in the Chern classes of $E_i$, for each $i$ and $l$.

If $\nabla^{(i)}$ is a connection for $E_i$ consider the family $\nabla^{\bullet}=(\nabla^{(q)},\ldots,\nabla^{(0)}).$ Then $\varphi(\xi)$ is the cohomology class of the differential form 
\begin{equation}\label{eq1} \varphi(\nabla^{\bullet})=\sum _{l} \varphi_l ^{(0)} (\nabla^{(0)} )\wedge\varphi_l ^{(1)}(\nabla^{(1)})\wedge\cdots\wedge\varphi_l ^{(q)}(\nabla^{(q)}).\end{equation}

From above definitions and   Remark \ref{remark1} we get. 

\begin{lemma}\label{lemma 3.4}
Let  $E_i$, $i=1,\ldots,q$ be a family of complex vector bundles on a complex manifold $M$ and $ \xi$ be the virtual bundle $\displaystyle \xi=\sum _{i=0} ^{q}(-1)^i E_i$. Let  $U$ be an open trivializing all $E_i$ and $\varphi$ be a  homogeneous symmetric polynomial of degree $d>0$. Then the   differential form $\varphi(\xi)$ vanishes on $U$.
\end{lemma}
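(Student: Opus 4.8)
The plan is to unwind the definition of $\varphi(\xi)$ on the trivializing open set $U$ and exhibit it as a sum of wedge products each of which has a factor that vanishes. First I would fix, for each $i$, the flat connection $\nabla^{(i)}$ on $E_i|_U$ obtained from a frame by declaring $\nabla^{(i)}(s^{(i)}_j)=0$ as in Remark \ref{remark1}; its curvature matrix is zero, so $c_p(\nabla^{(i)})\equiv 0$ on $U$ for every $p>0$, and hence $P(c_1(\nabla^{(i)}),\dots,c_{r_i}(\nabla^{(i)}))\equiv 0$ on $U$ for any polynomial $P$ without constant term. With this choice of the family $\nabla^{\bullet}=(\nabla^{(q)},\dots,\nabla^{(0)})$, the class $\varphi(\xi)$ is represented on $U$ by the differential form in (\ref{eq1}),
$$
\varphi(\nabla^{\bullet})=\sum_{l}\varphi_l^{(0)}(\nabla^{(0)})\wedge\varphi_l^{(1)}(\nabla^{(1)})\wedge\cdots\wedge\varphi_l^{(q)}(\nabla^{(q)}).
$$

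Next I would argue term by term. Since $\varphi$ is homogeneous of degree $d>0$, in the Whitney-type expansion $\varphi(\xi)=\sum_l\varphi_l^{(0)}(E_0)\cdots\varphi_l^{(q)}(E_q)$ each summand has total degree $d>0$, so in each summand at least one of the polynomials $\varphi_l^{(i)}$ is nonconstant (indeed has no constant term, being a homogeneous component of positive degree). For that index $i$, the factor $\varphi_l^{(i)}(\nabla^{(i)})=P(c_1(\nabla^{(i)}),\dots)$ vanishes identically on $U$ by the previous paragraph, hence the whole wedge product vanishes on $U$. Summing over $l$ gives $\varphi(\nabla^{\bullet})\equiv 0$ on $U$, and since this form represents $\varphi(\xi)$, we conclude $\varphi(\xi)=0$ on $U$.

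The only genuinely delicate point is bookkeeping about the constant terms: one must make sure that in the homogeneous-degree-$d$ expansion of $\varphi(\xi)$ no summand is a product in which every factor is a constant — this is exactly where $d>0$ is used, since a product of constants would have degree $0$. I would phrase this as: $\varphi_l^{(i)}$ are the bihomogeneous (multihomogeneous) pieces dual to the grading, their degrees $d_l^{(i)}$ satisfy $\sum_i d_l^{(i)}=d>0$ for every $l$, so some $d_l^{(i)}>0$, and a homogeneous polynomial of positive degree in the Chern classes has zero constant term; evaluated on the flat connection whose positive Chern forms vanish, it gives the zero form. Everything else is the formal manipulation of Chern--Weil forms already set up in the excerpt, together with Remarks \ref{remark1} and \ref{remark2}.
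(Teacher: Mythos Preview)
Your proof is correct and follows essentially the same approach as the paper: choose on $U$ the flat connections from Remark~\ref{remark1} for each $E_i$, expand $\varphi(\nabla^{\bullet})$ via (\ref{eq1}), and use that in every summand some $\varphi_l^{(i)}$ has positive degree (since $d>0$) and hence vanishes when evaluated on a connection with zero curvature. Your write-up is in fact more explicit than the paper's about the degree bookkeeping that forces some factor to be nonconstant.
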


\begin{proof}  In fact, since each $E_i$ is a trivial bundle on $U$, we can take  (see Remark \ref{remark1})  $\nabla^{(i)}$  a connection for $E_i$ on $U$ such that $c_j( \nabla^{(i)})=0$ for $j>0$. Writing $\nabla^{\bullet}=(\nabla^{(q)},\ldots,\nabla^{(0)})$ we get   
\begin{equation}\label{eq1111} \varphi(\nabla^{\bullet})=\sum _{l} \varphi_l ^{(0)} (\nabla^{(0)} )\varphi_l ^{(1)}(\nabla^{(1)})\cdots\varphi_l ^{(q)}(\nabla^{(q)})=0\end{equation} since  $\varphi_l ^{(i)} $ is a polynomial in the Chern classes of $E_i$, for each $i$ and $l$,  and the degree of some $\varphi_l ^i$ is greater than zero.

\end{proof}

\section{Proof of Theorem \ref{1.0.14}}

\begin{proof} We will consider $U$ a relatively compact open neighborhood of
$Z$ on $M$ disjoint from the other components of $S(\mathcal{F})$.We set $U_{0} = U \setminus \{Z\}$ and $U_{1} = U$ and consider the open
covering  $\mathcal{U} = \{U_{0}, U_{1} \}$ of $U$.

We will use the Chern-Weil theory of characteristic classes, see [\ref{Suwa2}] for more details, to compute the Chern class of the normal sheaf $\mathcal{N}_{\mathcal{F}}=\mathcal{N}_{12}\oplus \mathcal{N}_{2}$ of flag. To do this, we take resolutions of the normal sheaves $\mathcal{N}_{12}$ and $\mathcal{N}_{2}$ by real analytic vector bundles $E_{i}^{12}$ and
$E_{j}^{2}$ on $U$ see [\ref{AtHi}].
\begin{equation}
\label{seq1} 0 \longrightarrow A_{U}(E_{q}^{1 2}) \longrightarrow ... \longrightarrow A_{U}(E_{0}^{1 2})
\longrightarrow A_{U} \otimes \mathcal{N}_{1 2} \longrightarrow 0.
\end{equation} 
\begin{equation}
\label{seq2}  0 \longrightarrow A_{U}(E_{r}^{2}) \longrightarrow ... \longrightarrow A_{U}(E_{0}^{2})
\longrightarrow A_{U} \otimes \mathcal{N}_{2} \longrightarrow 0.
\end{equation} 
These sequences are exact on the sheaf level, but on $U_0$ we have exact sequences of vector bundles, then [\ref{BB1}, Definition 4.22 and Lemma 4.17] there exist connections $^{12}\nabla _0 ^{i}$ on $U_0 $ for each $E_{i}^{12}$ and $^{1 2} \nabla _0$ for $N_0 ^{12}$ such that the family of connections $(^{12}\nabla _0 ^{q},\dots, ^{12}\nabla_ 0 ^{0}, ^{12}\nabla _0)$ 
is compatible with  the sequence (\ref{seq1})  and $\varphi_1(^{12} \nabla_0 ^{\bullet})=\varphi_1( ^{1 2} \nabla _0)$, where  $^{12} \nabla_0 ^{\bullet}=(^{12}\nabla _0 ^{q},\dots, ^{12}\nabla_ 0 ^{0})$. 

Analogously, there exist connections $^{2}\nabla _0 ^{i}$ on $U_0 $ for each $E_{i}^{2}$ and $^{2} \nabla _0$ for $N_0 ^{2}$ such that the family of connections $(^{2}\nabla _0 ^{q},\dots, ^{2}\nabla_ 0 ^{0}, ^{2}\nabla _0)$ 
is compatible with the sequence (\ref{seq2}) and $\varphi_2(^{2} \nabla_0 ^{\bullet})=\varphi_2( ^{2} \nabla _0)$, where $^{2} \nabla_0 ^{\bullet}=(^{2}\nabla _0 ^{q},\dots, ^{2}\nabla_ 0 ^{0})$.

Let $^{12}\nabla _1 ^{i}$( respectively $^{2}\nabla _1 ^{i}$) be a connection on $U_1 $ for each $E_{i}^{12}$(respectively
$E_{i}^{2}$)   and set 
$^{12}\nabla _1 ^{\bullet}=(^{12}\nabla _1 ^{q},\dots, ^{12}\nabla _1 ^{0})$ (respectively $^{2}\nabla _1 ^{\bullet}=(^{2}\nabla _1 ^{r},\dots, ^{2}\nabla _1 ^{0})$).

Then the class $\varphi(\mathcal{N}_{\mathcal{F}}) =
\varphi_{1}(\mathcal{N}_{1 2}) \smallsmile
\varphi_{2}(\mathcal{N}_{2})$ in $H^{2(d_{1} + d_{2})}(U; \mathbb{C})$ is represented in  \\ $A^{2(d_{1} +
d_{2})}(U)$  by the cocycle
 $$\sloppy\begin{array}{lllll}\sloppy
\varphi(_{2}^{12}\nabla_{\ast}^{\bullet}) & = \Big(\varphi_{1}(^{12}\nabla_{0}),
\varphi_{1}(^{12}\nabla_{1}^{\bullet})  ,
\varphi_{1}(^{12}\nabla_{0}^{\bullet}, ^{12}\nabla_{1}^{\bullet}) \Big) \\ \\
& \smallsmile \Big(\varphi_{2}(^{2}\nabla_{0}),
\varphi_{2}(^{2}\nabla_{1}^{\bullet})  ,
\varphi_{2}(^{2}\nabla_{0}^{\bullet}, ^{2}\nabla_{1}^{\bullet}) \Big) \\ \\
& = \Big(\varphi_{1}(^{12}\nabla_{0} ) \wedge
\varphi_{2}(^{2}\nabla_{0} ),
\varphi_{1}(^{12}\nabla_{1}^{\bullet}) \wedge
\varphi_{2}(^{2}\nabla_{1}^{\bullet})  ,\varphi_{1}(^{12}\nabla_{0}^{\bullet}) \wedge
\varphi_{2}(^{2}\nabla_{0}^{\bullet}, ^{2}\nabla_{1}^{\bullet}) \\ \\
& +
\varphi_{1}(^{12}\nabla_{0}^{\bullet}, ^{12}\nabla_{1}^{\bullet})
\wedge \varphi_{2}(^{2}\nabla_{1}^{\bullet})\Big).
\end{array}$$

Since the rank of $N_0 ^{12}$ is $s_1-s_2$, the rank of $N^2 _0$ is $s_2$, by hypothesis about the degree of $\varphi_{1}$ and $\varphi_{2}$ and the Remark \ref{remark2} we have $\varphi_{1}(^{12}\nabla_{0} )=0$ or
$\varphi_{2}(^{2}\nabla_{0})=0$. Then

$$\begin{array}{ll}

\varphi(_{2}^{12}\nabla_{\ast}^{\bullet}) & = \Big(0,
\varphi_{1}(^{12}\nabla_{1}^{\bullet}) \wedge
\varphi_{2}(^{2}\nabla_{1}^{\bullet})  ,
\varphi_{1}(^{12}\nabla_{0}^{\bullet}) \wedge
\varphi_{2}(^{2}\nabla_{0}^{\bullet}, ^{2}\nabla_{1}^{\bullet}) \\ \\

& + \varphi_{1}(^{12}\nabla_{0}^{\bullet}, ^{12}\nabla_{1}^{\bullet})
\wedge \varphi_{2}(^{2}\nabla_{1}^{\bullet})\Big).

\end{array}$$

Therefore $\varphi(_{2}^{12}\nabla_{\ast}^{\bullet}) \in A^{2(d_{1} +
d_{2})}(U, U_{0})$. Denoting $[\varphi(_{2}^{12}\nabla_{\ast}^{\bullet})] =
\varphi_{Z}(\mathcal{N}_{\mathcal{F}}, \mathcal{F})$ in \\ $H^{2(d_{1} +
d_{2})}(U, U\backslash Z ; \mathbb{C})$ we have the residue  Res$_{\varphi_{1}, \varphi_{2}}
(\mathcal{N}_{\mathcal{F}}, \mathcal{F}; Z) = A(\varphi_{Z}(\mathcal{N}_{\mathcal{F}}, \mathcal{F}))$ in \\ $H_{2n -
2(d_{1} + d_{2})} (Z ; \mathbb{C}) $, where $A$ is the Alexander homomorphism, see [\ref{ABT}, p. 3028].

\end{proof}

\section{Proof of Theorem \ref{1.1}}

\begin{proof} Since $p$ is an isolated point of $S(\mathcal{F})$, we can take the open $U$, such that, all the vector bundles, $E_i ^{12}, E_j ^2$, on the resolutions of 
$\mathcal{N}_{1 2}$ and $\mathcal{N}_{2} $ (see the sequences (\ref{seq1}), (\ref{seq2}))
are trivial on $U$ and on $U_0=U\setminus \{p\}$. Therefore by the Lemma \ref{lemma 3.4} there exist connections $^{12}\nabla _0 ^{i}$ on $U_0 $ for each $E_{i}^{12}$ such that
if we denote $^{12}\nabla _0 ^{\bullet}$ by $(^{1 2}\nabla _0 ^{(q)},
\dots, ^{1 2}\nabla _ 0 ^{(0)})$ 
$$
\varphi_{1}(^{1 2}\nabla _0 ^{\bullet}) =0.
$$
 Analogously, there exist connections $^{2}\nabla _0 ^{i}$ on $U_0$ for each $E_{i}^{2}$ with the same property. That is, if we denote $^{2}\nabla _ 0 ^{\bullet}$ by $(^{2}\nabla _0 ^{(r)},\dots, ^{2}\nabla_ 0 ^{(0)}). $ Then
$$
\varphi_{2}(^{2}\nabla _0 ^{\bullet}) =0.
$$

Doing the same on $U_1=U$, there exist connections $^{12}\nabla _1 ^{i}$ for each $E_{i}^{12}$ and $^{2}\nabla _1 ^{i}$  for each $E_{i}^{2}$ on $U_1$ such that
if we denote $^{12}\nabla _1 ^{\bullet}$ by $(^{1 2}\nabla _1 ^{(q)},
\dots, ^{1 2}\nabla _1 ^{(0)})$ and  $^{2}\nabla _ 1 ^{\bullet}$ by $(^{2}\nabla _1 ^{(r)},\dots, ^{2}\nabla_ 1 ^{(0)}), $ we have
$$
\varphi_{1}(^{1 2}\nabla _1 ^{\bullet}) =0
\hspace{2cm}
\varphi_{2}(^{2}\nabla _1 ^{\bullet}) =0.
$$
Then the class $\varphi(\mathcal{N}_{\mathcal{F}})$ is
represented in  $A^{2(d_{1} +
d_{2})}(U)$  by the cocycle 
$$
\begin{array}{ll}
\varphi(_{2}^{12}\nabla_{\ast}^{\bullet}) & = \Big(\varphi_{1}(^{12}\nabla_{0}^{\bullet}) \wedge
\varphi_{2}(^{2}\nabla_{0}^{\bullet}),
\varphi_{1}(^{12}\nabla_{1}^{\bullet}) \wedge
\varphi_{2}(^{2}\nabla_{1}^{\bullet})  ,
\varphi_{1}(^{12}\nabla_{0}^{\bullet}) \wedge
\varphi_{2}(^{2}\nabla_{0}^{\bullet}, ^{2}\nabla_{1}^{\bullet}) \\ 
& + \varphi_{1}(^{12}\nabla_{0}^{\bullet}, ^{12}\nabla_{1}^{\bullet})
\wedge \varphi_{2}(^{2}\nabla_{1}^{\bullet})\Big) \\ 
& = \Big( 0,0,0\Big).
\end{array}$$

Therefore 
$$Res_{\varphi_{1}, \varphi_{2}} (\mathcal{F} , \mathcal{N}_{\mathcal{F}}; p ) = 0.$$

\end{proof}

\section{Proof of the Theorem \ref{1.2} }

In this section we will prove the Theorem \ref{1.2} using the Theorem \ref{1.1}. 




\begin{proof} In this prove we will use the transversal disc method used by Baum and Bott in [\ref{BB1}], Visik in [\ref{Vi}] and Corr\^ea and Louren\c co in [\ref{CoLo}].

By Baum-Bott Theorem for flags (Theorem \ref{1.0.14}), we have

\begin{equation}\label{03}
\int_{M} c_{1}(\mathcal{N}_{12})\varphi_{2}(\mathcal{N}_{2}) = \sum_{Z \in S(\mathcal{F})} Res_{c_{1}\varphi_{2}}(\mathcal{F},\mathcal{N}_{\mathcal{F}};Z),
\end{equation}

\noindent where $Res_{c_{1}\varphi_{2}}(\mathcal{F},\mathcal{N}_{\mathcal{F}};Z)$ denotes the flag residue at component $Z$, $\mathcal{N}_{\mathcal{F}} = \mathcal{N}_{12}\oplus \mathcal{N}_{2}$ the normal sheaf and $S(\mathcal{F})$ the singular set of flag $\mathcal{F}.$ 

In this case we consider $M=\mathbb{P}^{3}$ and we have $S(\mathcal{F}) = S_{0}(\mathcal{F}) \cup S_{1}(\mathcal{F})$, where $S_i(\mathcal{F})$ denotes the components of the singular set of the flag $\mathcal{F}$ of pure dimension $i$, for $i=0,1.$

We can rewrite (\ref{03}) as

$$\int_{\mathbb{P}^{3}} c_{1}(\mathcal{N}_{12})\varphi_{2}(\mathcal{N}_{2}) = \sum_{p \in S_{0}(\mathcal{F})} Res_{c_{1}\varphi_{2}}(\mathcal{F},\mathcal{N}_{\mathcal{F}};p) + \sum_{Z \in S_{1}(\mathcal{F})} Res_{c_{1}\varphi_{2}}(\mathcal{F}, \mathcal{N}_{\mathcal{F}};Z).$$

But, by Theorem \ref{1.1} $Res_{c_{1}\varphi_{2}}(\mathcal{F},\mathcal{N}_{\mathcal{F}};p) = 0$, we have

\begin{equation}\label{04}
\int_{\mathbb{P}^{3}} c_{1}(\mathcal{N}_{12})\varphi_{2}(\mathcal{N}_{2}) = \sum_{Z \in S_{1}(\mathcal{F})} Res_{c_{1}\varphi_{2}}(\mathcal{F},\mathcal{N}_{\mathcal{F}};Z).
\end{equation}


Now we will use the commutative diagram [\ref{Suwa2}, Proposition 3.11, p. 55] for foliation $\mathcal{F}_{2}$

$$ \xymatrix{ H^{2d}(\mathbb{P}^{3}, \mathbb{P}^{3} \backslash Z; \mathbb{C}) \ar[r]  \ar[d]_{A} & H^{2d}(\mathbb{P}^{3} ; \mathbb{C})  \\ H_{2(3-d)}(Z; \mathbb{C}) \ar[r]^{\iota^{\ast}} & H_{2(3-d)}(\mathbb{P}^{3}; \mathbb{C}) \ar[u]_{P}}$$

\noi where $A$ and $P$ denote, respectively, the Alexander homomorphism (isomorphism if $Z$ is nonsingular) and the Poincar\'e homomorphism (isomorphism since $\mathbb{P}^{3}$ is nonsingular), see [\ref{ABT}, p. 3028], $\iota$ denotes the inclusion map of $Z$ in $\mathbb{P}^{3}$ and $\iota^{\ast}$ its induced map in homology group. For go on we use the composition map

$$ \alpha = P \circ \iota^{\ast} : H_{2}(S(\mathcal{F}_{2}); \mathbb{C}) \longrightarrow H^{4}(\mathbb{P}^{3}; \mathbb{C}). $$

By Theorem 1.2 in [\ref{CoLo}] and Baum-Bott Theorem for $\mathcal{F}_{2}$ in [\ref{BB1}, Theorem 1], we have

$$ \sum_{Z \in S_{1}(\mathcal{F}_{2})} \alpha \Big(  Res_{\varphi_{2}}(\mathcal{F}_{2}|_{D};p)[Z]\Big ) = \varphi_{2}(\mathcal{N}_{2}) \ \ \mbox{in} \ \ H^{4}(\mathbb{P}^{3}; \mathbb{C}),$$

\noindent where 

$$Res_{\varphi_{2}}(\mathcal{F}_{2}|_{D};p) = Res_p\left[\begin{array}{cccc} \varphi(Jv|_{D})\\ v_1,v_2\end{array}\right ],
$$

\noi represents the Grothendieck residue of foliation $\mathcal{F}_{2}$ on $D \subset \mathbb{P}^{3}$ at $\{p\}$, with $D$ being a transversal disc to $Z$ such that $D \cap Z = \{p \}$.

Then

\begin{equation}\label{05}
\sum_{Z \in S_{1}(\mathcal{F}_{2})} Res_{\varphi_{2}}(\mathcal{F}_{2}|_{D};p)\eta_{Z} = \varphi_{2}(\mathcal{N}_{2})
\end{equation}

\noindent where $\eta_{Z} = \alpha([Z])$ is the Poincar\'e dual of $[Z]$.

Utilizing the exact sequence

$$ 0\longrightarrow \mathcal{N}_{12} \longrightarrow \mathcal{N}_{1} \longrightarrow \mathcal{N}_{2} \longrightarrow 0,$$

\noindent we have $c_{1}(\mathcal{N}_{12}) = c_{1}(\mathcal{N}_{1}) - c_{1}(\mathcal{N}_{2}).$

But by the exact sequence

$$ 0\longrightarrow \mathcal{F}_{1} \longrightarrow T\mathbb{P}^{3} \longrightarrow \mathcal{N}_{1} \longrightarrow 0,$$

\noi we have $c(T\mathbb{P}^{3})  = c(\mathcal{F}_{1})c(\mathcal{N}_{1})$, and since $\mathcal{F}_{1} = \mathcal{O}_{\mathbb{P}^{3}}(1-d_{1})$ see [\ref{Mol}, p.778],  then \\

$\begin{array}{clc}

c_{1}(\mathcal{N}_{1}) & = c_{1}(T\mathbb{P}^{3}) - c_{1}(\mathcal{F}_{1})   \\ \\
 
 & = 4c_{1}(\mathcal{O}_{\mathbb{P}^{3}}(1)) - (1-d_{1})c_{1}(\mathcal{O}_{\mathbb{P}^{3}}(1)) \\ \\
    
& = (3+d_{1}) h,    
    
\end{array}$ \\

\noi where $\mathcal{O}_{\mathbb{P}^{3}}(1)$ is the line bundle associated to a generic hyperplane $H$ on $\mathbb{P}^{3}$ see [\ref{Daniel}, Definition 2.2.7, p. 69] and $h = c_{1}(\mathcal{O}_{\mathbb{P}^{3}}(1))$ the hyperplane class, see [\ref{GrifHa}, p. 414].
Combining these Chern classes we have

\begin{equation}\label{06}
c_{1}(\mathcal{N}_{12}) = (1 + d_{1}-d_{2})h.
\end{equation}

Replacing the equations (\ref{05}) and (\ref{06}) in equation (\ref{04}), we have,

$$\int_{\mathbb{P}^{3}} (1+d_{1}-d_{2})h \sum_{Z \in S_{1}(\mathcal{F}_{2})} Res_{\varphi_{2}}(\mathcal{F}_{2}|_{D};p)\eta_{Z} = \sum_{Z \in S_{1}(\mathcal{F})} Res_{c_{1}\varphi_{2}}(\mathcal{F},\mathcal{N}_{\mathcal{F}};Z).
$$

$$(1+d_{1}-d_{2})\sum_{Z \in S_{1}(\mathcal{F}_{2})}Res_{\varphi_{2}}(\mathcal{F}_{2}|_{D};p)\int_{\mathbb{P}^{3}} h \eta_{Z} = \sum_{Z \in S_{1}(\mathcal{F})} Res_{c_{1}\varphi_{2}}(\mathcal{F},\mathcal{N}_{\mathcal{F}};Z).
$$ 
 
We note that  $\displaystyle\int_{\mathbb{P}^{3}} h \eta_{Z} = \deg(Z)$. Therefore

$$(1+d_{1}-d_{2})\sum_{Z \in S_{1}(\mathcal{F}_{2})}Res_{\varphi_{2}}(\mathcal{F}_{2}|_{D};p)\deg(Z)= \sum_{Z \in S(\mathcal{F})} Res_{c_{1}\varphi_{2}}(\mathcal{F},\mathcal{N}_{\mathcal{F}};Z).
$$

\end{proof}

\section{Proof of the Theorem \ref{thm 04}}

In this section we prove the Theorem \ref{thm 04} and we observe that its consequences in Corollary \ref{Coro 1.5} and Corollary \ref{Coro 1.6} are immediate.

\begin{proof} We consider $\pi : \tilde{\mathbb{P}^{3}} \rightarrow \mathbb{P}^{3}$ the blow up of $\mathbb{P}^{3}$ along the curve $C$ with exceptional divisor $\mathscr{C}$. We obtain a flag $\tilde{\mathcal{F}} = (\tilde{\mathcal{F}_{1}}, \tilde{\mathcal{F}_{2}})$ on $\tilde{\mathbb{P}^{3}}$ which has only isolated singularities by hypothesis that $\mathcal{F}_{2}$ is special along $C$. For this flag we have see ([\ref{No}], Lemma 2.2 (ii) p. 889)

\begin{equation}\label{eq 1.1}
\left\{\begin{array}{cc}
\tilde{\mathcal{F}_{1}} = & \pi^{\ast}\mathcal{F}_{1}\otimes [\mathscr{C}]^{l_{1}} \\
\tilde{\mathcal{F}_{2}} = & \pi^{\ast}\mathcal{F}_{2}\otimes [\mathscr{C}]^{l_{2}}. \\
\end{array}\right.
\end{equation}

So by Theorem \ref{1.0.14} and Theorem \ref{1.1} we have

\begin{equation}\label{eq 1.2}
\int_{\tilde{\mathbb{P}^{3}}} c_{1}(\tilde{\mathcal{N}_{12}})c_{2}(\tilde{\mathcal{N}_{2}}) = 0.
\end{equation}

Now we finish the prove exploring the equation (\ref{eq 1.2}). Before we present the necessary data for this. From the expression (\ref{eq 1.1}) we get

\begin{equation}\label{eq 1.3}
c_{1} (\tilde{\mathcal{F}_{1}}) = (1-d_{1})\pi^{\ast}h + l_{1}\mathscr{C} \ \ \ \mbox{and} \ \ \ c_{1} (\tilde{\mathcal{F}_{2}}) = (2-d_{2})\pi^{\ast}h + 2l_{2}\mathscr{C},
\end{equation}

\noindent where $h$ is the hyperplane class $c_{1}(\mathcal{O}_{\mathbb{P}^{3}}(1))$ and by abusing of notation we consider $c_{1}([\mathscr{C}]) = \mathscr{C}.$

Now by relation (\ref{eq 1.3}) and the short exact sequence on $\tilde{\mathbb{P}}^{3}$,
$$ 0 \rightarrow \tilde{\mathcal{F}_{1}} \rightarrow \tilde{\mathcal{F}_{2}} \rightarrow \tilde{\mathcal{N}_{12}} \rightarrow 0 $$
\noi we have $c_{1}(\tilde{\mathcal{N}}_{12}) = (1+d_{1}-d_{2})\pi^{\ast}h + (2l_{2}-l_{1})\mathscr{C}$. 

And by the short exact sequence on $\tilde{\mathbb{P}^{3}}$
$$ 0 \rightarrow \tilde{\mathcal{F}_{2}} \rightarrow \tilde{\mathbb{P}^{3}} \rightarrow \tilde{\mathcal{N}_{2}} \rightarrow 0 $$

\noi we get 
$$c_{2}(\tilde{\mathcal{N}_{2}}) = c_{2}(\tilde{\mathbb{P}_{3}}) - c_{2}(\tilde{\mathcal{F}_{2}}) -c_{1}(\tilde{\mathcal{F}_{2}})c_{1}(\tilde{\mathcal{N}_{2}}),$$

\noi where
$$c_{1}(\tilde{\mathcal{N}_{2}}) = (2+d_{2})\pi^{\ast}h -(2l_{2}+l_{1})\mathscr{C},$$

$$c_{2}(\tilde{\mathcal{F}_{2}}) = \pi^{\ast}c_{2}(\mathcal{F}_{2}) + c_{1}(\pi^{\ast}\mathcal{F}_{2})c_{1}(\mathscr{C}^{l_{2}}) +l_{2}^{2}\mathscr{C}^{2}.$$

From Theorem 3.1 p. 14 in [\ref{OmCoJa1}] we have $c_{2}(\mathcal{F}_{2}) = \Big(2+d_{2}^{2} - \deg(C)\Big)h^{2}$ so 

$$c_{2}(\tilde{\mathcal{F}_{2}}) = \Big(2+d_{2}^{2}- \deg C\Big)\pi^{\ast}h^{2} + l_{2}(2-d_{2})\pi^{\ast}h\mathscr{C} +l_{2}^{2}\mathscr{C}^{2}.$$ 

From Porteous Theorem [\ref{Por}, Theorem 2 p. 123], 
$$c_{2}(\tilde{\mathbb{P}^{3}}) = 6\pi^{\ast}h^{2} - \mathscr{C}^{2} -\pi_{\mathscr{C}}^{\ast}c_{1}(TC)\mathscr{C}.$$ 
With this at hand we have
$$c_{2}(\tilde{\mathcal{N}_{2}}) = \deg (C) \pi^{\ast}h^{2} + \Big(-3l_{2}d_{2}-2l_{2}-d_{2}+2\Big)\pi^{\ast}h\mathscr{C} - \pi^{\ast}_{\mathscr{C}}c_{1}(TC)\mathscr{C} + \Big(3l_{2}^{2}+2l_{2}-1\Big)\mathscr{C}^{2}.$$

So we can calculate the product of Chern classes 
$$
\begin{array}{ll}
c_{1}(\tilde{\mathcal{N}_{12}})c_{2}(\tilde{\mathcal{N}_{2}}) & = (1+d_{1}-d_{2})\deg(C)\pi^{\ast}h^{3} + \\ \\  &(1+d_{1}-d_{2})(-3l_{2}d_{2}-2l_{2}-d_{2}+2)\pi^{\ast}h^{2}\mathscr{C} \\ \\
& = - (1+d_{1}-d_{2})\pi^{\ast}_{\mathscr{C}}c_{1}(TC)\mathscr{C}\pi^{\ast}h + (1+d_{1}-d_{2})(3l_{2}^{2}+2l_{2}-1)\pi^{\ast}h\mathscr{C}^{2} \\ \\
& + (2l_{2}-l_{1})\deg(C)\pi^{\ast}h^{2}\mathscr{C} + (2l_{2}-l_{1})(-3l_{2}d_{2}-2l_{2}-d_{2}+2)\pi^{\ast}h\mathscr{C}^{2} \\ \\
& - (2l_{2}-l_{1})\pi^{\ast}_{\mathscr{C}}c_{1}(TC)\mathscr{C}^{2} + (2l_{2}-l_{1})(3l_{2}^{2}+2l_{2}-1)\mathscr{C}^{3}.
\end{array}$$

Now we use the properties of intersection theory see [\ref{No}].

\begin{enumerate}

 \item $\displaystyle\int_{\tilde{\mathbb{P}^{3}}}\pi^{\ast}h^{3} = 1.$

 \item $\displaystyle\int_{\tilde{\mathbb{P}^{3}}}\pi^{\ast}h^{2}\mathscr{C} = \int_{\mathscr{C}}\pi^{\ast}h^{2} =  \int_{C}h^{2}=0 .$ 

 \item $\displaystyle\int_{\tilde{\mathbb{P}^{3}}}\pi^{\ast}h\mathscr{C}^{2} = \int_{\mathscr{C}}\pi^{\ast}h\mathscr{C} =  -\int_{C}h=-\deg(C) .$

 \item $\displaystyle\int_{\tilde{\mathbb{P}^{3}}}\mathscr{C}^{3} = \int_{\mathscr{C}}\mathscr{C}^{2} =  \chi(C) - 4\deg(C) .$
 
 \item $\displaystyle\int_{\tilde{\mathbb{P}^{3}}}\pi^{\ast}h\pi^{\ast}_{\mathscr{C}}c_{1}(TC)\mathscr{C} = 0 .$

 \item $\displaystyle\int_{\tilde{\mathbb{P}^{3}}}\pi^{\ast}_{\mathscr{C}}c_{1}(TC)\mathscr{C}^{2} = \int_{C}c_{1}(TC) = \chi(C) .$ 
\end{enumerate}

\noi Integrating the Chern class $c_{1}(\tilde{\mathcal{N}_{12}})c_{2}(\tilde{\mathcal{N}_{2}})$ on $\tilde{\mathbb{P}_{3}}$  we get

$$
\begin{array}{ll}

\displaystyle\int_{\tilde{\mathbb{P}_{3}}}  c_{1}(\tilde{\mathcal{N}_{12}})c_{2}(\tilde{\mathcal{N}_{2}}) & = (1+d_{1}-d_{2})\deg(C) + (1+d_{1}-d_{2})(3l_{2}^{2}+2l_{2}-1)(-\deg(C)) \\ \\

& + (2l_{2}-l_{1})(-3l_{2}d_{2}-2l_{2}-d_{2}+2)(-\deg(C))-(2l_{2}-l_{1})\chi(C) \\ \\

& + (2l_{2}-l_{1})(3l_{2}^{2}+2l_{2}-1)(\chi(C)-4\deg(C)) \\ \\

& = \deg(C)\Big[(1+d_{1}-d_{2})(-l_{2}(2+3l_{2})+2) + \\ \\ &(2l_{2}-l_{1})(-3l_{2}(2+4l_{2}-d_{2})+2+d_{2})  \Big] \\ \\

& + \chi(C)(2l_{2}-l_{1})\Big(l_{2}(2+3l_{2})-2\Big) \\ \\

& = 0.

\end{array}$$

\noindent Therefore, 

\noi $\deg(C)\Big[(1+d_{1}-d_{2})(-l_{2}(2+3l_{2})+2) + (2l_{2}-l_{1})(-3l_{2}(2+4l_{2}-d_{2})+2+d_{2})  \Big]  \\ =\chi(C)(2l_{2}-l_{1})\Big(-l_{2}(2+3l_{2})+2\Big).$
\end{proof}

\section{Examples}

This section is dedicated to show examples to illustrate some results.

\begin{example} Let $\mathcal{F} = (\mathcal{F}_{1}, \mathcal{F}_{2})$ be a $2$-flag on $\mathbb{P}^{3}$ where $\mathcal{F}_{2}$ is the codimension one holomorphic foliation induced by homogeneous $1$-form

$$\omega = -z_{0}z_{3}dz_{0} -z_{1}z_{3}dz_{1}-z_{2}z_{3}dz_{2} + (z_{0}^2 + z_{1}^{2} + z_{2}^{2})dz_{3}. $$

The foliation $\mathcal{F}_{1}$ is induced by homogeneous vector field

$$ X = z_{1}z_{3}\dfrac{\partial}{\partial z_{0}} -z_{0}z_{3}\dfrac{\partial}{\partial z_{1}} + (z_{0}^2 + z_{1}^{2} + z_{2}^{2})\dfrac{\partial}{\partial z_{2}} + z_{2}z_{3}\dfrac{\partial}{\partial z_{3}}. $$

The singular set of $\mathcal{F}_{2}$ is given by 

$$ S(\mathcal{F}_{2}) = \Big\{ C=\{z_{3}=z_{0}^{2}+z_{1}^{2}+z_{2}^{2} = 0\}, P=[0:0:0:1]       \Big\}.$$

By Theorem \ref{1.2} we have the equality
$$\displaystyle\sum_{Z \in S(\mathcal{F})} Res_{c_{1}c_{1}^{2}}(\mathcal{F}, \mathcal{N}_{\mathcal{F}}; Z) =(1 + d_{1} - d_{2})\deg(C)Res_{c_{1}^{2}}(\mathcal{F}_{2}|_{D}; q),$$

\noi where $ C \cap D = \{q\}$ with $D$ a transversal disc to $C$.

Now we will calculate the sum of residues of flag using the above expression. For this let us consider the chart $U_{0} = \{z_{0} =1 \}$, with coordinates $x =z_{1}/z_{0}, y =z_{2}/z_{0}$ and $z =z_{3}/z_{0}$, so



$$C|_{U_{0}} = \{ z= 1+x^{2}+y^{2} = 0\}.$$

Let us consider a small transversal disc $D \subset \{x=0\}$ such that $ D \cap C =\{ (0,\sqrt{-1},0) \} = \{q\}.$

$$\omega|_{D} = -yzdy + (1+y^{2})dz.$$

The dual vector field of this $1$-form is given by

$$Y_{\omega} = (1+y^{2})\dfrac{\partial}{\partial y} + (yz)\dfrac{\partial}{\partial z}.$$

\noi The Jacabian matrix of the $Y_{\omega}$ is

$$
JY_{\omega}=\left[\begin{array}{cc}
2y & 0 \\
z &  y
\end{array}\right].$$

Therefore, we have the residue

$$Res_{c_{1}^{2}}(\mathcal{F}_{2}|_{D};q)= \dfrac{c_{1}^{2}(JY_{\omega}(q))}{\det(JY_{\omega}(q))}= \dfrac{9}{2}.$$

Since $d_{1} = 2, d_{2} = 1$ and $deg(C)=2$ we have

$$\displaystyle\sum_{Z \in S(\mathcal{F})} Res_{c_{1}c_{1}^{2}}(\mathcal{F}, \mathcal{N}_{\mathcal{F}}; Z) =18.$$

\end{example}

\begin{example} We consider now the foliation $\mathcal{G}$ on $\mathbb{P}^{2}$ induced by 1-form 
$$\eta = \Big[(z_{0}-z_{1})z_{1} + z_{2}(z_{0}-z_{2})\Big]dz_{0} + z_{0}(z_{1}-z_{0})dz_{1} + z_{0}(z_{2}-z_{0})dz_{2}.$$
This foliation has singular set $$S(\mathcal{G}) = \Big\{ p_{1} = [0:1:\sqrt{-1}], p_{2} = [0:1:-\sqrt{-1}], p_{3} = [1:1:1] \Big\}.$$
Now let $\pi$ be the rational map
$$\begin{array}{cccc}
\pi : & \mathbb{P}^{3} & \longrightarrow  &  \mathbb{P}^{2} \\
& [z_{0}:z_{1}:z_{2}:z_{3}] & \longmapsto & [z_{0}:z_{1}:z_{2}].
\end{array}$$
If we consider the pull back of $\mathcal{G}$ by $\pi$ we have the holomorphic foliation $\mathcal{F}_ {2} = \pi^{\ast}\mathcal{G}$ on $\mathbb{P}^{3}$ of degree one ($d_{2} = 1$) given by $1$-form
$$\omega = \Big[(z_{0}-z_{1})z_{1} + z_{2}(z_{0}-z_{2})\Big]dz_{0} + z_{0}(z_{1}-z_{0})dz_{1} + z_{0}(z_{2}-z_{0})dz_{2}$$

\noi which its singular set is the union of three lines

$$ S(\mathcal{F}_{2}) = L_{1} \cup L_{2} \cup L_{3},$$
where 
$L_{1} = [0:z_{1}:\sqrt{-1}z_{1}:z_{3}],$
$L_{2} = [0:z_{1}:-\sqrt{-1}z_{1}:z_{3}],$
$L_{3} = [z_{1}:z_{1}:z_{1}:z_{3}].$ 

We set the holomorphic one dimensional foliation $\mathcal{F}_{1}$ on $\mathbb{P}^{3}$ of degree two ($d_{1} = 2$) induced by homogeneous vector field
$$ X =z_{0}( z_{0} - z_{1}) \dfrac{\partial}{\partial z_{0}} + \Big[(z_{0}-z_{1})z_{1} + z_{2}(z_{0}-z_{2})\Big]\dfrac{\partial}{\partial z_{1}}  .$$
We observe that the foliation $\mathcal{F}_{1}$ is a subfoliation of $\mathcal{F}_{2}$ because $\omega(X) = 0.$ Furthermore the flag $\mathcal{F} = (\mathcal{F}_{1}, \mathcal{F}_{2})$ satisfies the hypothesis of Theorem \ref{1.2}. Thus we can calculate the sum of residues of the flag.
$$
\begin{array}{ll}
\displaystyle\sum_{Z \in S(\mathcal{F})} Res_{c_{1}c_{1}^{2}}(\mathcal{F}, \mathcal{N}_{\mathcal{F}}; Z) & = (1+d_{1}-d_{2})\Big[\deg(L_{1})Res_{c_{1}^{2}}(\mathcal{F}_{2}|_{D};t_{1})+ \\ 
& + \deg(L_{2})Res_{c_{1}^{2}}(\mathcal{F}_{2}|_{D};t_{2}) + \deg(L_{3})Res_{c_{1}^{2}}(\mathcal{F}_{2}|_{D};t_{3})\Big].
\end{array}$$

On chart $U_{3} = \{ z_{3} \neq 0 \}$ we have coordinates $x =z_{0}/z_{3}, y =z_{1}/z_{3}$ and $z =z_{2}/z_{3}$ with $l_{1} = L_{1}|_{U_{3}} = (0,y,\sqrt{-1}y), l_{2} = L_{2}|_{U_{3}} = (0,y,-\sqrt{-1}y)$ and $l_{3} = L_{3}|_{U_{3}} = (y,y,y).$ Furthermore $\omega$ on $U_{3}$ is given by
$$ \omega|_{U_{3}} = \Big[(x-y)y + z(x-z)\Big]dx + x(y-x)dy + x(z-x)dz.$$
Now we choose a small transversal disc at each line
 $D = \{ z=1 \}.$
The foliation $\mathcal{F}_{2}$ is given on $D$ by $1$-form
$$\omega|_{D} = \Big[(x-y)y + (x-1)\Big]dx + x(y-x)dy$$
\noi and its dual vector field is
$$ Y_{\omega} = x(y-x)\dfrac{\partial}{\partial x} - \Big[(x-y)y + (x-1)\Big]\dfrac{\partial}{\partial y}.$$
In particular, we have the Jacabian matrix of dual vector field $JY_{\omega}$
$$
JY_{\omega}=\left[\begin{array}{cc}
y-2x & x \\
-y-1 &  -x+2y
\end{array}\right].$$ Thus 
$$Res_{c_{1}^{2}}(\mathcal{F}_{2}|_{D};t_{1}) = \dfrac{c_{1}^{2}(JY_{\omega})(t_{1})}{\det(JY_{\omega})(t_{1})} = \dfrac{9}{2},$$
$$Res_{c_{1}^{2}}(\mathcal{F}_{2}|_{D};t_{2}) = \dfrac{c_{1}^{2}(JY_{\omega})(t_{2})}{\det(JY_{\omega})(t_{2})} = \dfrac{9}{2},$$
$$Res_{c_{1}^{2}}(\mathcal{F}_{2}|_{D};t_{3}) = \dfrac{c_{1}^{2}(JY_{\omega})(t_{3})}{\det(JY_{\omega})(t_{3})} = 0,$$

\noi where $t_{i}$ is the intersect point $D \cap l_{i}.$

Therefore
$$\sum_{Z \in S(\mathcal{F})} Res_{c_{1}c_{1}^{2}}(\mathcal{F}, \mathcal{N}_{\mathcal{F}}; Z) = 2(\dfrac{9}{2} + \dfrac{9}{2} + 0) = 18,$$

\noi since $\deg(l_{i}) = 1$ for $i = 1,2,3$ and $1+d_{1}-d_{2} = 2$.

To finish the example we confirm the residue calculation using Theorem \ref{1.0.14}. In this case, one has

$$\int_{\mathbb{P}^{3}} c_{1}(\mathcal{N}_{12})c_{1}^{2}(\mathcal{N}_{2}) = \int_{\mathbb{P}^{3}} (1+d_{1}-d_{2})h (2+d_{2})^{2}h^{2} = 18.$$

\end{example}

\subsection*{Acknowledgments}
We are grateful to Mauricio Corr\^ea and  Marcio G. Soares    for interesting conversations. The authors were  partially supported by the FAPEMIG [grant number 38155289/2021].

\end{document}